\newtheorem{thm}{Theorem}
\newtheorem{cor}[thm]{Corollary}
\newtheorem{lemma}[thm]{Lemma}
\newtheorem{proposition}[thm]{Proposition}
\newtheorem{claim}[thm]{Claim}
\newtheorem{rem}[thm]{Remark}
\newenvironment{proof}{\noindent\bf{Proof.}\rm}{\hfill$\blacksquare$\bigskip}
\newcommand{\etal}{{\em et al.}} 
\begin{document}

\title{Random walks with the minimum degree local rule have $O(n^2)$ cover time}

\author{
	Roee David\thanks{
		Department of Computer Science and Applied Mathematics, Weizmann Institute of Science, Rehovot 76100,
		Israel. E-mail: {\tt roee.david@weizmann.ac.il}.}
	\and Uriel Feige\thanks{
		Department of Computer Science and Applied Mathematics, Weizmann Institute of Science, Rehovot 76100,
		Israel. E-mail: {\tt uriel.feige@weizmann.ac.il}.}
	}

\maketitle

\begin{abstract}

For a simple (unbiased) random walk on a connected graph with $n$ vertices, the cover time (the expected number of steps it takes to visit all vertices) is at most $O(n^3)$. We consider locally biased random walks, in which the probability of traversing an edge depends on the degrees of its endpoints. We confirm a conjecture of Abdullah, Cooper and Draief [2015] that the min-degree local bias rule ensures a cover time of $O(n^2)$. For this we formulate and prove the following lemma about spanning trees.

Let $R(e)$ denote for edge $e$ the minimum degree among its two endpoints. We say that a weight function $W$ for the edges is feasible if it is nonnegative, dominated by $R$ (for every edge $W(e) \le R(e)$) and the sum over all edges of the ratios $W(e)/R(e)$ equals $n-1$. For example, in trees $W(e) = R(e)$, and in regular graphs the sum of edge weights is $d(n-1)$.

{\bf Lemma:} for every feasible $W$, the minimum weight spanning tree has total weight $O(n)$.

For regular graphs, a similar lemma was proved by Kahn, Linial, Nisan and Saks [1989].
\end{abstract}
\newpage


\section{Introduction}

Let $G=G(V,E)$ be a simple connected graph with $n$ vertices and $m$ edges.
For any vertex $v\in V$, $d(v)$ denotes the degree of $v$ (the number
of edges incident with $v$), and $N(v)$ denotes the set of neighbors of $v$ (those vertices $u$ for which $(u,v) \in E$). Let $c : E \rightarrow R^+$ be a function, referred to as {\em conductance}, that assigns nonnegative weights to edges of $G$, subject to the condition that the subgraph induced by edges of strictly positive conductances spans all of $V$ and is connected. For every edge $e$ with positive conductance, we refer to $\frac{1}{c(e)}$ as its resistance, and denote it by $r(e)$.
Given $G$ and $c$, we consider the discrete time Markov chain, which we will refer to also as a random walk, whose states are the vertices of $G$, and at each step the random walk moves from the current vertex (say, $v$) to a neighboring vertex (say $u$), chosen at random with probability proportional to $c(v,u)$ (we slightly abuse notation and use $c(u,v)$ to denote $c((v,u))$). Namely, if the chain is at vertex $v$, it moves to each vertex $u \in N(v)$ with probability $\frac{c(v,u)}{\sum_{w \in N(v)} c(v,w)}$.
Such Markov chains are {\em reversible} (see Section~\ref{sec:preliminaries}). In the special case in which $c(e) = r(e) = 1$ for every edge we refer to the resulting Markov chain as a {\em simple random walk}. Given a graph $G(V,E)$ and a conductance function $c$, we shall be interested in the following properties of random walks:

\begin{itemize}

\item
For two vertices $u,v \in V$, the {\em hitting time} $H[u,v]$ is the
expected number of steps it takes a walk that starts at $u$ to reach $v$. The term {\em maximum hitting time} refers to $\max_{u,v \in V}[H[u,v]]$.

\item For two vertices $u,v \in V$, the {\em commute time} $C[u,v]$ is
the expected number of steps that it takes
a walk to go from $u$ to $v$ and back to $u$
(that is, $C[u,v] = H[u,v] + H[v,u]$). The term {\em maximum commute time} refers to $\max_{u,v \in V}[C[u,v]]$, and it cannot exceed twice the maximum hitting time.

\item The {\em cover time} $COV[G,c]$ (or $COV[G]$ for simple random walks)
is the expected number of steps it takes
a random walk to visit all vertices of the graph, starting at the
worst possible vertex (that maximizes this value).

\item The {\em cyclic cover time}, $CYC[G,c]$, is the expected number of
steps it takes a random walk to
visit all vertices of the graph in a prespecified cyclic order, for the
best cyclic order (that minimizes this value). That is,

$$CYC[G,c] =
H[v_{i_1},v_{i_2}] + H[v_{i_2},v_{i_3}] + ... + H[v_{i_{n-1}},v_{i_n}] +
H[v_{i_n},v_{i_1}]$$

where $(i_1,i_2, ... , i_n)$ is a
permutation that minimizes the above sum.

\end{itemize}

Clearly, $CYC[G,c] > COV[G,c]$ and $CYC[G,c] \ge \max_{u,v\in V}[C[u,v]]$.

Abdullah, Cooper and Draief~\cite{ACD} proposed the conductance function $c(u,v) = \frac{1}{\min[d(u),d(v)]}$, which can be equivalently described as $r(u,v) = \min[d(u),d(v)]$. They referred to it as the {\em minimum degree weighting scheme}. For this conductance function, they proved that for every connected graph the maximum hitting time is at most $6n^2$, and concluded from this (using the relation $COV[G] \le \max_{u,v}[H[u,v]]\ln n$, proved by Matthews~\cite{matthews}) that the cover time is at most $O(n^2\log n)$. They further conjectured that with the minimum degree weighting scheme every connected graph has cover time $O(n^2)$. Our main result is a proof of this conjecture, and in fact a stronger result showing that also the cyclic cover time is upper bounded by $O(n^2)$. For cyclic cover time, this result is best possible (up to constant multiplicative factors), because it can be shown that for every reversible Markov chain the cyclic cover time is at least $\Omega(n^2)$.

\subsection{Related work}

For simple random walks, the range of possible values of the cover time is well understood. Aleliunas \etal{} \cite{AKLLR}
showed that for any connected graph, $COV[G] < 2nm$. That work also introduced the spanning tree argument that is used also in establishing other upper bounds cited below, and in fact provides upper bounds on $CYC[G]$ and not just on $COV[G]$.
%
%
For regular graphs the upper bound can be improved to $O(n^2)$, as shown by Kahn \etal{} in~\cite{KLNS}. A more refined connection between regularity and cyclic cover time was provided by Coppersmith \etal{}~\cite{CFS} who proved that for any connected graph $G$,


\begin{equation}\label{eq:CFS}
\frac{3}{10}CYC[G] \le \left(\sum_{v\in V} d_v\right)\left(\sum_{v\in V} {1\over d(v) + 1}\right) \le
CYC[G].
\end{equation}

Observe that for every graph
$$\Omega(n^2) \le \left(\sum_{v\in V} d(v)\right)\left(\sum_{v\in V} {1\over d(v) + 1}\right) \le O(n^3)$$
\noindent and for $d$-regular graphs the value of this expression is $\frac{d}{d+1}n^2$.

Ikeda~\etal{} \cite{IKY} initiated the following line of research: is there a local rule for constructing a conductance function that ensures that for every connected graph the cover time will be $O(n^2)$. As shown in~\cite{IKY}, without further restriction on the class of graphs, $\Omega(n^2)$ is the best one can hope for, e.g., for a path of length $n$. By a local rule one means here that the conductance of an edge $(u,v)$ is a function only of $d(u)$ and $d(v)$. (A nonlocal rule can pick a spanning tree in $G$, give all its edges conductance~1 and all other edges conductance~0. The cover time will then necessarily be $O(n^2)$, by~\cite{AKLLR}.) Ikeda~\etal{} proposed the conductance function $c(u,v) = \frac{1}{\sqrt{d(u)d(v)}}$, showed that it ensures that the maximum hitting time is $O(n^2)$, concluded (using~\cite{matthews}) that the cover time is $O(n^2 \log n)$, but left open the question of whether there is any local rule that ensures cover time of $O(n^2)$. Abdullah~\etal{} \cite{ACD} proposed the conductance function $c(u,v) = \frac{1}{\min[d(u),d(v)]}$, proved for it bounds similar to those proved in~\cite{IKY}, and explicitly conjectured that it leads to a cover time of $O(n^2)$ (a conjecture that we confirm in this paper). (There are additional results in~\cite{ACD} that are not directly relevant to the current paper.)

A different but related approach for obtaining Markov chains with maximum hitting times at most $O(n^2)$, that of so called {\em Metropolis walks}, was proposed by Nonaka \etal{} \cite{NOSY}, who also showed that it does not give a cover time better than $\Omega(n^2 \log n)$ on the {\em glitter star} graph (see Figure~\ref{fig:glitter}).

\subsection{Our results}

\subsubsection{Main results}

Our main theorem is the following.

\begin{thm}
\label{thm:upper}
For every connected graph on $n$ vertices, the conductance function implied by the minimum degree weighting scheme of Abdullah~\etal{} \cite{ACD} gives a random walk with cyclic cover time at most $18n^2$.
\end{thm}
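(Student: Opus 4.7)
The plan is to combine three ingredients: the Aleliunas--Karp--Lipton--Lov\'{a}sz--Rackoff spanning-tree argument for cyclic cover time, the standard reversible-chain identity $C[u,v] = 2\bigl(\sum_{f} c(f)\bigr) R_{\mathrm{eff}}(u,v)$, and the combinatorial lemma highlighted in the abstract. The spanning-tree argument says that for any spanning tree $T$ of $G$, an Eulerian (DFS) tour of $T$ together with the triangle inequality for hitting times produces a cyclic order on $V$ whose sum of consecutive hitting times is at most $\sum_{e \in T} C[u_e, v_e]$; hence $CYC[G,c] \le \sum_{e \in T} C[u_e,v_e]$ for every spanning tree $T$. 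Combined with the commute-time identity, it suffices to exhibit a spanning tree $T^{*}$ with $\bigl(\sum_{f} c(f)\bigr) \cdot \sum_{e \in T^{*}} R_{\mathrm{eff}}(u_e, v_e) = O(n^2)$.

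The first factor is handled by a direct counting argument. Since $c(e) = 1/R(e) = 1/\min[d(u),d(v)]$, I would charge each edge $e=(u,v)$ to its lower-degree endpoint; each vertex $u$ then receives at most $d(u)$ edges of conductance $1/d(u)$, so its total contribution is at most $1$, giving $\sum_{f} c(f) \le n$.

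For the second factor, define $W(e) := R_{\mathrm{eff}}(u_e,v_e)$ and apply the Lemma of the abstract to this weight function. Feasibility of $W$ has two parts. First, $W(e) \le r(e) = R(e)$ holds by Rayleigh monotonicity, since the effective resistance between two vertices never exceeds the resistance of any single edge joining them. Second, the normalization $\sum_e W(e)/R(e) = n-1$ follows from the classical identity $\Pr[e \in \mathcal{T}] = c(e)\, R_{\mathrm{eff}}(u_e,v_e) = W(e)/R(e)$ for the weighted uniform random spanning tree $\mathcal{T}$ (a consequence of Kirchhoff's matrix-tree theorem), together with $\sum_e \Pr[e \in \mathcal{T}] = E[|\mathcal{T}|] = n-1$. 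The Lemma therefore produces a spanning tree $T^{*}$ with $\sum_{e \in T^{*}} R_{\mathrm{eff}}(u_e,v_e) = O(n)$, and plugging this together with $\sum_{f} c(f) \le n$ into the chain of inequalities above yields $CYC[G,c] = O(n^2)$. The explicit constant $18 n^2$ then falls out by tracking the explicit constant produced by the Lemma together with the factor of $2$ in the commute-time identity.

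The main obstacle in this plan is the Lemma itself; the reduction above is essentially bookkeeping once the Lemma is granted. The Lemma generalizes the Kahn--Linial--Nisan--Saks bound from regular graphs (where $R$ is constant) to graphs with varying minimum-degree resistances subject to the fractional spanning-tree constraint $\sum_e W(e)/R(e) = n-1$. I expect its proof to proceed via a greedy, Kruskal-style analysis on edges sorted by $W$, using the fractional budget to control how many heavy edges can appear in the minimum spanning tree, together with the combinatorial structure of $R$ (namely that $R(e)$ equals the smaller of the two endpoint degrees) to bound the supply of useful light edges.
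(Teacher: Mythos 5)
Your reduction is exactly the paper's proof of the theorem: the spanning-tree bound on cyclic cover time plus the commute-time identity (Corollary~\ref{cor:ST}), the charging argument giving $\sum_e c(e)\le n-1$ (Proposition~\ref{pro:ACD}), and an application of Lemma~\ref{lem:main} to $W(e)=R_{\mathrm{eff}}(e)$, whose feasibility follows from $R(e)\le r(e)$ and Foster's identity (your matrix-tree derivation of the latter is a valid substitute for citing Foster). The only divergence is your speculated Kruskal-style proof of the Lemma itself, which is not how the paper proves it (it uses a three-step greedy forest construction), but since you correctly treat the Lemma as a granted ingredient, the proof of the theorem as stated matches the paper's.
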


Theorem~\ref{thm:upper} is best possible in the following sense:

\begin{proposition}
\label{pro:lower}
For every connected graph on $n$ vertices and every conductance function the associated random walk has cyclic cover time at least $\frac{1}{2}n^2$.
\end{proposition}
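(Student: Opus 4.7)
The plan is to lower bound the cyclic cover time by reducing the one-way hitting sum to a commute-time sum, and then to a simple expression in the stationary distribution. Fix an arbitrary cyclic order $\sigma = (v_{i_1}, \ldots, v_{i_n})$; it suffices to show $\sum_k H[v_{i_k}, v_{i_{k+1}}] \ge n^2/2$, which then transfers to the minimum by taking minima over $\sigma$.

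The reversibility fact I would exploit is that $H[u,v] - H[v,u]$ is antisymmetric and equals a pure potential difference $\psi(v) - \psi(u)$ for some function $\psi$ on $V$ (this follows for instance from Tetali's formula $H[u,v] = m_c \sum_w \pi(w)(R_{\mathrm{eff}}(u,v) + R_{\mathrm{eff}}(v,w) - R_{\mathrm{eff}}(u,w))$, where $m_c = \sum_e c(e)$ and $\pi(v) = c(v)/(2m_c)$ with $c(v) = \sum_w c(v,w)$). Consequently $\sum_k (H[v_{i_k}, v_{i_{k+1}}] - H[v_{i_{k+1}}, v_{i_k}])$ telescopes to zero around any cycle, so the forward and reverse hitting sums agree and
$$\sum_k H[v_{i_k}, v_{i_{k+1}}] \;=\; \tfrac{1}{2}\sum_k C[v_{i_k}, v_{i_{k+1}}].$$

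Next I would apply the classical commute-time formula $C[u,v] = 2 m_c R_{\mathrm{eff}}(u,v)$ together with the elementary cut bound $C_{\mathrm{eff}}(u,v) \le c(u)$ (the edges incident to $u$ form a cut separating $u$ from $v$), which gives $R_{\mathrm{eff}}(u,v) \ge \max(1/c(u), 1/c(v)) \ge \tfrac12(1/c(u) + 1/c(v))$. Plugging in and substituting $m_c/c(v) = 1/(2\pi(v))$ yields $C[u,v] \ge \tfrac12(1/\pi(u) + 1/\pi(v))$. Summing around the cycle, each vertex appears exactly twice as an endpoint, so
$$\sum_k C[v_{i_k}, v_{i_{k+1}}] \;\ge\; \sum_{v \in V}\frac{1}{\pi(v)} \;\ge\; n^2,$$
the last step by Cauchy--Schwarz applied to $\pi$ with $\sum_v \pi(v) = 1$. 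Chaining the two inequalities gives $\sum_k H[v_{i_k}, v_{i_{k+1}}] \ge n^2/2$ for every cyclic order, as required.

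The only step above that is not a one-line routine is the telescoping identity $\sum_k (H[v_{i_k}, v_{i_{k+1}}] - H[v_{i_{k+1}}, v_{i_k}]) = 0$; this is the main point where reversibility enters, and I would either cite Tetali or give a short self-contained derivation via the electrical-potential representation. The other ingredients --- the commute-time identity, the trivial cut bound on effective conductance, and AM--HM on $\pi$ --- are all standard and essentially immediate.
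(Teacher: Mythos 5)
Your proof is correct, and its skeleton coincides with the paper's: reversibility turns the cyclic sum of hitting times into half the cyclic sum of commute times (the paper's Equation~(\ref{eq:cyclic}), which you rederive via the potential/Tetali representation), each commute time is bounded below by $\tfrac12\bigl(1/\pi(u)+1/\pi(v)\bigr)$, each vertex is counted twice when summing around the cycle, and AM--HM on $\pi$ finishes. The one step where you genuinely diverge is the per-edge bound: the paper gets it probabilistically, observing that the first return time satisfies $H[u,u]\le C[u,v]$ and using $H[u,u]=1/\pi(u)$ (Equation~(\ref{eq:return})), whereas you route through the electrical machinery, $C[u,v]=2m_c R_{\mathrm{eff}}(u,v)$ together with the cut bound $R_{\mathrm{eff}}(u,v)\ge 1/c(u)$ coming from the star of edges at $u$. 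Both derivations are standard and yield exactly the same inequality $C[u,v]\ge\max\bigl(1/\pi(u),1/\pi(v)\bigr)$; the paper's is slightly more self-contained (it needs nothing about effective resistance beyond what its preliminaries already list), while yours makes the bound transparent as a cut argument and avoids the return-time identity. Your telescoping justification of the cycle identity is sound, and since your bound holds for every cyclic order, passing to the minimizing order is immediate, just as in the paper.
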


The upper bound of Theorem~\ref{thm:upper} of course applies also to the cover time. However, the lower bound in Proposition~\ref{pro:lower} does not hold for the cover time.

Our proof of Theorem~\ref{thm:upper} is based on a Lemma that can be stated without any reference to random walks. Given a connected graph $G(V,E)$, for every $(u,v) \in E$ define $r(u,v) = \min[d(u),d(v)]$. A weight function $w : E \longrightarrow R$ is said to be {\em feasible} if it satisfies the following two conditions:

\begin{itemize}

\item $0 \le w(e) \le r(e)$ for every $e \in E$ (where $r(u,v) = \min[d(u),d(v)]$).

\item $\sum_{e\in E} \frac{w(e)}{r(e)} = n-1$.

\end{itemize}

\begin{lemma}
\label{lem:main}
For every connected graph $G$ on $n$ vertices and every feasible weight function $w$, graph $G$ has a spanning tree of total edge weight at most $9n$.
\end{lemma}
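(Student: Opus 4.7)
My plan is to exhibit a spanning tree of total $w$-weight at most $9n$ by writing the minimum spanning tree weight through Kruskal's integral identity
\[
\mathrm{MST}(G,w) = \int_0^\infty (c(t) - 1)\, dt,
\]
where $c(t)$ denotes the number of connected components of the subgraph $G_{\le t} := (V, \{e : w(e) \le t\})$, and then bounding this integral. Feasibility gives, for any threshold $t$, that the heavy-edge set $H(t) := \{e : w(e) > t\}$ satisfies $\sum_{e \in H(t)} 1/r(e) < (n-1)/t$, and since $G$ is connected while $G_{\le t}$ has $c(t)$ components, $H(t)$ must contain at least $c(t) - 1$ bridging edges. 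The key task is to translate the $1/r$ bound on $H(t)$ into a sharp bound on $c(t) - 1$ using the structural fact $r(e) = \min(d(u), d(v))$.

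The most naive deduction $c(t) - 1 < (n-1) \max_e r(e)/t$ has a divergent integral, so one must exploit the local degree constraint: a vertex $v$ is incident to at most $d(v)$ edges whose $r$-value is at most $d(v)$. I would split $[0, \infty)$ into two regimes---using the trivial bound $c(t) - 1 \le n - 1$ for small $t$ and a refined feasibility-based bound (accounting for the degree sequence) for large $t$---and optimize the threshold. A cleaner alternative worth pursuing is a direct greedy construction: order vertices $v_1, \ldots, v_n$ by increasing degree and attach each $v_i$ to the growing tree via a single edge, which necessarily has $w \le d(v_i)$; then argue via a pigeonhole on the feasibility budget that for most $v_i$ there is an attaching edge of weight significantly below $d(v_i)$.

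The main obstacle is the amortization step that turns the single global constraint $\sum_e w(e)/r(e) = n - 1$ into a linear bound on the tree weight. I expect to split edges by their saturation ratio $w(e)/r(e)$: saturated edges (ratio close to $1$) consume most of the budget and are therefore few, while slack edges are numerous but cheap to include. Optimizing the saturation threshold, together with the fact that each vertex bounds the number of low-$r$ edges incident to it, should yield the constant $9$. The most delicate piece is ensuring that the edges chosen by such a procedure actually form a spanning tree rather than merely a low-weight subgraph; handling this may require a fallback rule that pays up to $d(v_i)$ for exceptional vertices, whose count must itself be bounded against the budget.
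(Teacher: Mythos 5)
Your submission is a plan rather than a proof: the step you yourself call ``the main obstacle''---turning the single budget $\sum_e w(e)/r(e)=n-1$ into an $O(n)$ bound on the weight of a \emph{spanning tree}---is exactly what is left open, and neither of your two routes closes it. For the Kruskal-integral route, the strongest per-threshold bound available is of order $c(t)-1=O(n/t)$: indeed, every component $C$ of $G_{\le t}$ either contains a vertex $u$ with $d_G(u)\ge 2|C|$, in which case $u$ has more than $d_G(u)/2$ incident heavy edges, each with $w/r>t/d_G(u)$, so such components consume budget $t/2$ apiece and number $O(n/t)$; or all degrees in $C$ are below $2|C|$, in which case any heavy edge leaving $C$ at $u\in C$ satisfies $t<w\le r\le d_G(u)<2|C|$, so $|C|>t/2$ and again there are $O(n/t)$ such components. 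But $\int_0^{n}\min\bigl(n,\,O(n/t)\bigr)\,dt=\Theta(n\log n)$, and the bound $c(t)=\Theta(n/t)$ is actually attainable at any fixed threshold (take $n/t$ cliques of size $t$ joined in a path, put weight $\approx t$ on the joining edges and spread the remaining budget thinly inside the cliques). So any argument that bounds $c(t)$ threshold by threshold and integrates is intrinsically stuck at $O(n\log n)$; reaching $O(n)$ requires a cross-threshold amortization that the proposal does not supply. Your greedy alternative has a further problem even before the amortization: processing vertices in increasing order of degree does not guarantee that $v_i$ has any edge to the already-built tree, and since $\sum_i d(v_i)=2m$ can be $\Theta(n^2)$, everything hinges on the unproved claim that almost all attachments are far cheaper than $d(v_i)$ while the exceptional ones are few; the exceptional count cannot be charged directly to the budget, because an edge may have tiny ratio $w/r$ and yet be the only available exit from a small component containing a high-degree vertex.

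For comparison, the paper's proof is built on per-vertex budgets $\rho(v)=\sum_{u\in N(v)}w(u,v)/r(u,v)$, which sum to $2(n-1)$. At each $v$ it keeps the $(1-\alpha)d_G(v)$ incident edges of smallest ratio (``good'' edges; each such edge into $v$ has weight at most $\rho(v)/\alpha$ because $r(u,v)\le d_G(v)$), grows a directed forest of good edges greedily in decreasing order of $\rho(v)$, and then---using the structural fact that this greedy order leaves no good edge directed from an earlier tree into a later one---augments the forest so that every component $C$ contains at least $(1-\alpha)\max_{v\in C}d_G(v)$ vertices, at extra cost $O(\rho(\mathrm{root})/\alpha)$ per component. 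Only because of this \emph{size requirement} does the final patching step (one arbitrary edge per component, of weight at most $\max_{v\in C}d_G(v)\le |C|/(1-\alpha)$) cost $O(n)$ in total. Your ``saturation ratio'' and ``fallback rule'' intuitions point in this direction, but the size-enforcement mechanism---which is the heart of the proof and the reason the connectivity repair is affordable---has no counterpart in your proposal.
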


The proof of Lemma~\ref{lem:main} is the main new technical contribution of our paper.

We did not attempt to optimize the leading constants in the statements of Lemma~\ref{lem:main}, Theorem~\ref{thm:upper} and Proposition~\ref{pro:lower}.

\subsubsection{Additional results}
\label{sec:local}

A {\em local rule} for conductance is one by which the conductance of an edge $(u,v)$ depends on $d(u)$ and $d(v)$.
Two local rules for conductance functions are said to be {\em equivalent} to each other if one can be obtained from the other by scaling (e.g., the rules $c(u,v) = d_u + d_v$ and the $c(u,v) = 2d_u + 2d_v$ are equivalent). Two local rules $c_1$ and $c_2$ for conductance functions are said to be {\em roughly equivalent} if there are constants $0 < \alpha \le \beta$ such that for every edge $e$, $\alpha c_1(e) \le c_2(e) \le \beta c_1(e)$. We provide an additional aspect by which Theorem~\ref{thm:upper} is best possible.

\begin{proposition}
\label{pro:local}
Every local rule for establishing a conductance function is either roughly equivalent to the minimum degree weighting scheme, or there are graphs on which the associated random walk has cyclic cover time
    $\omega(n^2)$ (namely, not bounded by $O(n^2)$).
\end{proposition}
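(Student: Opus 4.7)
\textbf{Plan for Proposition~\ref{pro:local}.}

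Write the local rule as $c(u,v) = f(d(u), d(v))$ and set $g(a,b) := f(a,b) \cdot \min(a,b)$. The induced random walk is invariant under global rescaling of $f$, so $f$ is roughly equivalent to the min-degree rule if and only if, after rescaling, $g(a,b)$ is bounded above and bounded below away from zero, uniformly over all positive integer pairs $(a,b)$ (which are exactly the degree pairs realizable as an edge of some simple graph). Hence, if $f$ is not roughly equivalent, then for every constant $M$ there exist (after rescaling) pairs $(a_1,b_1)$ and $(a_2,b_2)$ of positive integers with $g(a_1,b_1)/g(a_2,b_2) \ge M$.

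The plan is to exhibit, for each $n$, a graph $G_n$ on $n$ vertices whose cyclic cover time is $\omega(n^2)$. I treat first the case in which $g$ is unbounded above. Take $(a_2,b_2)=(2,2)$ (so $g(2,2)=\Theta(1)$ is a fixed positive constant), and let $M(n)\to\infty$ be the supremum of $g(a_1,b_1)/g(2,2)$ over integer pairs with $a_1,b_1\le n/4$; this sequence tends to infinity by the assumption. Let $G_n$ be a ``broom'': an $(a_1,b_1)$-biregular bipartite subgraph $H$ on $n/2$ vertices (which has $\Theta(n\min(a_1,b_1))$ edges, each of conductance $f(a_1,b_1)=g(a_1,b_1)/\min(a_1,b_1)$), connected through a single vertex to a simple path $P$ on the remaining $n/2$ vertices, whose interior vertices all have degree~$2$.

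I would bound the cyclic cover time from below by a commute time, using $CYC[G_n,c] \ge \max_{u,v}C[u,v]$ and the electrical identity for reversible chains,
\[
  C[u,v] \;=\; 2\,R_{\mathrm{eff}}(u,v)\cdot\sum_{e\in E}c(e).
\]
Take $u$ to be the far endpoint of $P$ and $v$ any vertex of $H$. The unique $u$-to-$v$ route through $P$ has length $\Theta(n)$, and each of its interior edges has resistance $2/g(2,2)=\Theta(1)$, so $R_{\mathrm{eff}}(u,v)\ge\Theta(n)$. The subgraph $H$ alone contributes
\[
  \Theta\!\bigl(n\,\min(a_1,b_1)\bigr)\cdot\frac{g(a_1,b_1)}{\min(a_1,b_1)}\;=\;\Theta\!\bigl(n\,g(a_1,b_1)\bigr)
\]
to $\sum_e c(e)$. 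Multiplying, $C[u,v]\ge\Theta\!\bigl(n^2\,g(a_1,b_1)/g(2,2)\bigr)=\Theta(n^2 M(n))=\omega(n^2)$, giving the desired lower bound on $CYC[G_n,c]$.

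The main obstacle will be the dual case, in which $g$ attains values arbitrarily close to zero only on pairs whose coordinates grow with $n$; there one cannot simply route the bottleneck through a degree-$2$ path. In that case I would replace the long path by a single low-conductance bridge between two dense subgraphs each of which supplies the required endpoint degrees at the bridge, and rerun the same commute-time calculation (the bridge contributes to $R_{\mathrm{eff}}$ a term $\min(a_2,b_2)/g(a_2,b_2)$, which is large by assumption, while the dense sides supply a large $\sum_e c(e)$). Verifying that such a bridge gadget can be assembled uniformly for any sequence of degree witnesses, and handling the boundary effects at the attachment point of $P$ and $H$ in the first case, is the real technical content; the underlying electrical argument is identical in both cases.
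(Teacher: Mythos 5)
Your overall route differs from the paper's: you lower-bound the cyclic cover time by a single commute time, $CYC[G,c]\ge\max_{u,v}C[u,v]$, together with $C[u,v]=2R_{\mathrm{eff}}(u,v)\sum_{e}c(e)$, whereas the paper uses the return-time bound $CYC[G,c]\ge\frac12\sum_{v}1/\pi(v)$ (as in Proposition~\ref{pro:lower}). Your Case 1 ($g$ unbounded above relative to $g(2,2)$) is essentially correct modulo the attachment/divisibility details you flag: the broom gives $R_{\mathrm{eff}}\ge\Theta(n)$ (each interior path edge is a cut edge, so the series bound is legitimate) and $\sum_e c(e)\ge\Theta(n\,g(a_1,b_1))$, hence a commute time of order $n^2M(n)$.

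The dual case, however, is not just ``the real technical content''; the method you propose for it cannot work. The max-degree rule $c(u,v)=1/\max[d(u),d(v)]$ lies squarely in your dual case (it has $g(a,b)=\min(a,b)/\max(a,b)\to0$, so it is not roughly equivalent to the min-degree rule), yet for this rule \emph{every} commute time in \emph{every} graph is $O(n^2)$: charging each edge to its higher-degree endpoint gives $\sum_e c(e)\le n$, and along a shortest $u$--$v$ path each vertex of $G$ is adjacent to at most three path vertices, so the path resistance satisfies $\sum_i\max[d(x_i),d(x_{i+1})]\le 2\sum_i d(x_i)\le 6n$, whence $R_{\mathrm{eff}}(u,v)\le 6n$ and $C[u,v]\le 12n^2$. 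So no graph and no choice of $u,v$ can make $\max_{u,v}C[u,v]=\omega(n^2)$ for this rule, and your bound $CYC\ge\max C[u,v]$ is too weak. Concretely, in your bridge gadget the endpoints must have degrees $a_2,b_2$, forcing $n>\max(a_2,b_2)$; the bridge resistance $\min(a_2,b_2)/g(a_2,b_2)$ is then only ``large'' in absolute terms, not relative to $n$ (for the max rule it equals $\max(a_2,b_2)<n$), and the dense sides can only be guaranteed to contribute $\Theta(n)$ to $\sum_e c(e)$, since in this case you control $g$ only at the anchor pair $(2,2)$ and the rule may again be tiny on whatever degree pairs the dense gadgets create. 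To close this case you need an aggregate bound over many vertices, which is exactly what the paper does: it builds a graph out of two families of complete bipartite gadgets joined by a matching so that $\Theta(n)$ vertices have incident conductance $\Theta(1)$ (giving $\sum_e c(e)=\Omega(n)$) while $\Theta(n)$ other vertices have incident conductance $O(1/\beta)$, hence $\pi(v)=O(1/(\beta n))$ for those, and $CYC\ge\frac12\sum_v 1/\pi(v)=\Omega(\beta n^2)$; note that this single construction covers both of your cases at once.
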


We do not know if cyclic cover time can be replaced by cover time in Proposition~\ref{pro:local}.

%

\section{Preliminaries}
\label{sec:preliminaries}

We provide some background on random walks (for more details, see for example~\cite{aldous}). Given a random walk on an $n$-vertex connected graph $G(V,E)$ with conductance $c$, for every vertex $v\in V$ define
$$\pi(v) = \frac{\sum_{u \in N(v)} c(u,v)}{\sum_{e \in E} c(e)}$$
\noindent and observe that $\sum_{v\in V} \pi(v) = 1$.  For random walks with a stationary distribution (when $G$ is connected and has odd cycles) $\pi$ as defined above coincides with the stationary distribution. The expected time it takes a walk that starts at $v$ to return to $v$ satisfies:

\begin{equation}\label{eq:return}
H[v,v] = \frac{1}{\pi(v)}
\end{equation}

The equality
$$\pi(u)\frac{c(u,v)}{\sum_{w \in N(u)} c(u,w)} = \pi(v)\frac{c(u,v)}{\sum_{w \in N(v)} c(u,w)}$$
\noindent which holds for every edge $(u,v) \in E$ implies that the walk is {\em reversible}. For reversible random walks, the following identity holds for every sequence $v_1, \ldots, v_k$ of vertices:

$$
H[v_k, v_1] + \sum_{i=1}^{k-1} H[v_i,v_{i+1}] = H[v_1, v_k] + \sum_{i=k}^{2} H[v_i,v_{i-1}]
$$

Consequently,

\begin{equation}\label{eq:cyclic}
H[v_k, v_1] + \sum_{i=1}^{k-1} H[v_i,v_{i+1}] = \frac{1}{2}\left(C[v_k, v_1] + \sum_{i=1}^{k-1} C[v_i,v_{i+1}]\right)
\end{equation}

The following approach, initiated by~\cite{AKLLR} (see also~\cite{KLNS,CFS}, among other works based on this approach), can be used in order to upper bound the cover time, and in fact also the cyclic cover time.
Given an undirected connected graph $G = G(V,E)$ and a conductance function $c$, consider an arbitrary spanning tree $T$ in $G$. Then the cyclic cover time is upper bounded by the sum of commute times along the edges of $T$. Namely (here $T$ is thought of as the set of edges that make up the spanning tree):

\begin{equation}\label{eq:ST}
CYC(G,c) \le \sum_{(u,v) \in T} C[u,v]
\end{equation}

To get a handle on commute times,
it will be convenient for us to use the well known correspondence between random
walks and resistance of electrical networks. We state here without proof the properties that we shall use, and the reader is referred to~\cite{DS,CRRST,tetali} for further details.

Given a conductance function $c$, each edge of $G(V,E)$ is viewed as a resistor of resistance $r(e) = \frac{1}{c(e)}$ ohm. The
{\em effective resistance} between vertices $u$ and $v$, denoted by $R(u,v)$, is the
voltage that develops at $u$ if a current of 1 amp is injected into
$u$, and $v$ is grounded. The effective resistance
captures the commute time in the sense that for every two vertices $u$ and $v$,

\begin{equation}\label{eq:commute}
C[u,v] = 2R(u,v)\sum_{e\in E} c(e)
\end{equation}

Combining inequality~(\ref{eq:ST}) with equality~(\ref{eq:commute}) he have the following theorem (known and used by previous work):

\begin{thm}
\label{thm:ST}
Let $G(V,E)$ be an arbitrary connected graph, let $c$ by a conductance function, and for every $e \in E$ let $R(e)$ denote the induced effective resistance. Then given any collection $T$ of edges that form a spanning tree in $G$, the cyclic cover time satisfies:

$$CYC[G,c] \le 2\left(\sum_{e\in E} c(e)\right) \cdot \left(\sum_{e \in T} R(e)\right)$$
\end{thm}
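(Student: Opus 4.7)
The plan is to combine the two ingredients already packaged in the preliminaries: the spanning-tree upper bound on cyclic cover time stated in inequality~(\ref{eq:ST}) and the commute-time/effective-resistance identity stated in equation~(\ref{eq:commute}). Since both are cited from prior work, the proof reduces to making this combination explicit and checking that the factors line up correctly.

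First I would justify~(\ref{eq:ST}) itself, in case the reader wants it self-contained. The idea is to root $T$ at an arbitrary vertex and perform a depth-first traversal, which yields a closed walk $v_{i_1},v_{i_2},\dots,v_{i_{2n-1}},v_{i_1}$ visiting every vertex of $G$ and using each edge of $T$ exactly twice, once in each direction. By definition the cyclic cover time is obtained by minimising over cyclic orderings, so
\[
CYC[G,c] \;\le\; H[v_{i_1},v_{i_2}] + H[v_{i_2},v_{i_3}] + \dots + H[v_{i_{2n-1}},v_{i_1}],
\]
and by the reversibility identity~(\ref{eq:cyclic}) the right-hand side, grouped along the closed DFS tour, is exactly $\sum_{(u,v)\in T} C[u,v]$, giving~(\ref{eq:ST}).

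Next I would invoke~(\ref{eq:commute}), which says $C[u,v] = 2R(u,v)\sum_{e\in E} c(e)$ for every pair of vertices, and in particular for the endpoints of every edge of $T$. Substituting into the bound from~(\ref{eq:ST}) and pulling the common factor $\sum_{e\in E} c(e)$ out of the sum over tree edges yields
\[
CYC[G,c] \;\le\; \sum_{(u,v)\in T} 2R(u,v)\sum_{e\in E} c(e) \;=\; 2\left(\sum_{e\in E} c(e)\right)\cdot\left(\sum_{e\in T} R(e)\right),
\]
which is exactly the statement of the theorem.

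There is no real obstacle here: the only place where any care is needed is the DFS-tour step, where one must verify that each tree edge is used once in each direction so that the cyclic sum of hitting times telescopes into a sum of commute times via~(\ref{eq:cyclic}). Everything else is substitution. The significance of the theorem lies not in the proof, but in what it sets up: to bound $CYC[G,c]$ one is free to choose any spanning tree $T$, and the task becomes producing a tree for which $\sum_{e\in T} R(e)$ is small relative to $1/\sum_e c(e)$. This is precisely the role that Lemma~\ref{lem:main} plays in the proof of Theorem~\ref{thm:upper}.
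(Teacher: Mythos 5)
Your proposal is correct and takes essentially the same route as the paper, which simply combines inequality~(\ref{eq:ST}) with equality~(\ref{eq:commute}) and factors out $\sum_{e\in E}c(e)$. The only minor imprecision is in your optional justification of~(\ref{eq:ST}): the closed DFS tour repeats vertices, so it is not itself a cyclic order of the $n$ vertices, and passing to the first-visit cyclic order requires the hitting-time triangle inequality $H[a,c]\le H[a,b]+H[b,c]$ --- a standard step, and one the paper sidesteps entirely by citing~(\ref{eq:ST}) from prior work.
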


To make use of Theorem~\ref{thm:ST} we need to use properties of effective resistance.
We present without proof an identity due to Foster \cite{foster,tetali}:

\begin{lemma}
\label{lem:foster}
Let $G(V,E)$ be a connected graph with $n$ vertices, and for every edge $e \in E$ let $r(e) > 0$ denote its resistance (inverse of the conductance function $c$ in our usage).
Then the resistances and effective resistances along the edges of $G$ satisfy the following identity:

$$\sum_{e\in E} \frac{R(e)}{r(e)} \; = \; n - 1$$

\end{lemma}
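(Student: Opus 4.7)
The plan is to reduce Foster's identity to a single trace computation with the weighted graph Laplacian, using the quadratic-form representation of effective resistance.

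\textbf{Setup and key identity.} Fix an arbitrary orientation of $E$ and let $B \in \mathbb{R}^{V \times E}$ be the signed incidence matrix, with columns $b_e = \mathbf{1}_u - \mathbf{1}_v$ for $e=(u,v)$. Let $C = \mathrm{diag}(c(e))$, so the weighted Laplacian is $L = BCB^T = \sum_{e \in E} c(e)\, b_e b_e^T$. Since $G$ is connected, $\ker L = \mathrm{span}(\mathbf{1})$ is one-dimensional, so $L$ has rank $n-1$. The preliminary input that carries all of the electrical-network content is the identity
$$R(e) \;=\; b_e^T L^+ b_e$$
for every edge $e$, where $L^+$ denotes the Moore--Penrose pseudoinverse. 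This follows from Kirchhoff's and Ohm's laws: injecting one amp at $u$ and extracting it at $v$ amounts to solving $L\phi = b_e$ (solvable because $b_e \perp \mathbf{1}$), and the effective resistance is the resulting potential drop $b_e^T \phi$.

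\textbf{Trace computation.} Once the preliminary identity is in hand, everything else is mechanical. Writing $r(e) = 1/c(e)$ and using the cyclic property of the trace,
\begin{align*}
\sum_{e \in E} \frac{R(e)}{r(e)}
&= \sum_{e \in E} c(e)\, \mathrm{tr}(L^+ b_e b_e^T) \\
&= \mathrm{tr}\!\left( L^+ \sum_{e \in E} c(e)\, b_e b_e^T \right) = \mathrm{tr}(L^+ L).
\end{align*}
The matrix $L^+ L$ is the orthogonal projector onto $(\ker L)^\perp$, whose trace equals its rank, namely $n-1$. This is exactly Foster's identity.

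The only delicate step is the preliminary lemma $R(e) = b_e^T L^+ b_e$, which requires unpacking the electrical definition of effective resistance given earlier in the paper and matching it to the Laplacian linear system; once granted, the proof is a two-line trace manipulation. An attractive alternative finish that avoids pseudoinverses is to use the spanning-tree interpretation $c(e) R(e) = \Pr[e \in T]$, where $T$ is a spanning tree sampled with probability proportional to $\prod_{e' \in T} c(e')$; summing over $e$ then gives $\mathbb{E}[\,|T|\,] = n-1$ immediately.
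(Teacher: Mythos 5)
Your proof is correct. There is nothing in the paper to compare it against: the paper states Foster's identity without proof, citing \cite{foster,tetali}, so your argument supplies a derivation the paper omits. What you give is the standard algebraic proof: with $L = BCB^{T}$ and $R(e) = b_e^{T} L^{+} b_e$, the sum $\sum_{e} c(e)\, b_e^{T} L^{+} b_e$ collapses by linearity and cyclicity of the trace to $\mathrm{tr}(L^{+}L)$, the orthogonal projector onto $(\ker L)^{\perp}$, whose trace is $n-1$ because connectivity together with strict positivity of the conductances (guaranteed by the hypothesis $r(e)>0$ for every edge) forces $\ker L = \mathrm{span}(\mathbf{1})$. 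The one step carrying real content, $R(e) = b_e^{T} L^{+} b_e$, is handled adequately: $L\phi = b_e$ is solvable since $b_e \perp \mathbf{1}$, and the potential drop $b_e^{T}\phi$ does not depend on the choice of solution because any two solutions differ by a multiple of $\mathbf{1}$, which $b_e^{T}$ annihilates; this matches the paper's electrical definition of $R(u,v)$ as the voltage developed under a unit current injection. Your alternative finish via $c(e)R(e) = \Pr[e \in T]$ for a randomly sampled weighted spanning tree is also valid (it is essentially the probabilistic proof associated with the cited reference of Tetali), and it has the minor advantage of avoiding the pseudoinverse altogether, at the cost of invoking the matrix-tree/transfer-current fact as a black box.
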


In this paper we shall use only two properties of effective resistance, listed below:

\begin{enumerate}

\item Allowed range of values: $0 \le R(e) \le r(e)$ for every edge $e \in E$.

\item Foster's identity: $\sum_{e\in E} \frac{R(e)}{r(e)} \; = \; n - 1$.

\end{enumerate}

We remark that the above two properties suffice in order to prove the upper bounds on the cover time provided in~\cite{AKLLR,KLNS}, but the proofs of the bounds in~\cite{CFS} (see Inequality~(\ref{eq:CFS})) use additional properties not listed here.

\subsection{The minimum degree weighting scheme}

Recall that in the minimum degree weighting scheme $r(u,v) = \min[d(u),d(v)]$, or equivalently, $c(u,v) = \frac{1}{\min[d(u),d(v)]}$. The following proposition is proved in~\cite{ACD} (and a similar proposition is proved in~\cite{IKY} for a related local rule). We repeat its proof for completeness.

\begin{proposition}
\label{pro:ACD}
Let $G(V,E)$ be an arbitrary $n$ vertex graph. Then for the conductance function $c(u,v) = \frac{1}{\min[d(u),d(v)]}$ it holds that
$$\sum_{e\in E} c(e) \le n-1$$
\end{proposition}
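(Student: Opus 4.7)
My plan is to prove Proposition~\ref{pro:ACD} by a simple orientation/charging argument. Order the vertices as $v_1, v_2, \ldots, v_n$ in nonincreasing order of degree, breaking ties arbitrarily. For each edge $e = (v_i, v_j)$ with $i > j$ we have $d(v_i) \le d(v_j)$, hence
$$c(e) \;=\; \frac{1}{\min[d(v_i), d(v_j)]} \;=\; \frac{1}{d(v_i)}.$$
I will charge the weight $c(e)$ to the higher-indexed (lower-ranked) endpoint $v_i$, so that every edge is charged to exactly one endpoint.

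The key observation is that the charges received by a single vertex $v_i$ are all equal to $1/d(v_i)$, and the number of them is at most $d(v_i)$ (since each is along a distinct edge incident to $v_i$). Hence the total charge on $v_i$ is at most $1$ for every $i \ge 2$. Moreover, the top-ranked vertex $v_1$ has (weakly) maximum degree, so there is no $j < 1$ and $v_1$ receives no charge at all. Summing the charges over all vertices then yields
$$\sum_{e\in E} c(e) \;\le\; 0 + (n-1)\cdot 1 \;=\; n-1,$$
which is the desired bound.

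There is no serious obstacle here; the only minor point to be careful about is that the linear ordering of vertices gives a consistent tie-breaking rule, so each edge is charged to a unique endpoint and the count ``at most $d(v_i)$ charges on $v_i$'' is immediate. The argument is the same sort of vertex-ordering/charging trick that is used, for example, to bound the arboricity of a graph by its maximum average degree.
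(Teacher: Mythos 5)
Your proof is correct and is essentially the paper's own argument: the paper orders vertices by increasing degree and charges each edge $\frac{1}{\min[d(u),d(v)]}$ to its lower-degree endpoint (with index tie-breaking), so that each of $n-1$ vertices receives total charge at most $1$ while the extremal (maximum-degree) vertex receives nothing. Your nonincreasing ordering is just the mirror image of the paper's ordering, so there is nothing further to add.
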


\begin{proof}
Order the vertices of $G$ from~1 to~$n$ in order of increasing degrees (breaking ties arbitrarily). Then:

$$\sum_{e\in E} c(e) = \sum_{i\in V} \sum_{j \in N(i) ; j > i} c(u,v) =  \sum_{i\in V} \sum_{j \in N(i) ; j > i} \frac{1}{d(i)} \le \sum_{i=1}^{n-1} d(i)\cdot \frac{1}{d(i)} \le n-1$$
\end{proof}

The star graph is an example for which equality holds in Proposition~\ref{pro:ACD}.

\section{Proof of Theorem~\ref{thm:upper}}
\label{sec:mainproof}

In this section we prove our main theorem, namely, Theorem~\ref{thm:upper}.
We first prove our key lemma, namely, Lemma~\ref{lem:main}. We remark that previous work~\cite{KLNS} can be shown to imply that Lemma~\ref{lem:main} holds in the special case in which $G$ is regular, but the proof techniques used there do not seem to suffice in order to prove Lemma~\ref{lem:main} in its full generality.

Before proving Lemma~\ref{lem:main}, it is instructive to consider another special of case of this lemma, namely when the graph $G$ is a tree.  In this case $G$ has a unique spanning tree, and moreover, $w(e) = r(e)$ is the unique feasible weight function.

\begin{proposition}
\label{pro:tree}
For every graph $G$ that is a tree and for its unique feasible weight function $w$, the sum of edge weights at most $2n-4$.
\end{proposition}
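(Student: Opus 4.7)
The plan is to sidestep induction entirely and reduce the bound to a single structural fact about trees via a layer-cake rewriting of the minimum. For every edge $e=(u,v)$ I would write
$$\min(d(u),d(v)) \;=\; \sum_{k \ge 1} \mathbf{1}\{d(u)\ge k\}\,\mathbf{1}\{d(v)\ge k\},$$
and set $V_k := \{v\in V : d(v)\ge k\}$. Swapping the order of summation then gives
$$\sum_{e\in E} w(e) \;=\; \sum_{k \ge 1}\bigl|E(T[V_k])\bigr|,$$
where $T[V_k]$ denotes the subgraph of $T$ induced on $V_k$.

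Next I would exploit the tree hypothesis globally: every induced subgraph of a tree is a forest, so $|E(T[V_k])| \le |V_k|-1$ whenever $V_k$ is nonempty, and $V_k=\varnothing$ for $k$ exceeding the maximum degree $\Delta$. Combined with the standard identity $\sum_{k\ge 1}|V_k| = \sum_v d(v) = 2(n-1)$, this yields
$$\sum_{e\in E} w(e) \;\le\; \sum_{k=1}^{\Delta}\bigl(|V_k|-1\bigr) \;=\; 2(n-1)-\Delta.$$

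To conclude I need only $\Delta \ge 2$, which holds for every tree on $n \ge 3$ vertices since not every vertex can be a leaf. I do not anticipate a real obstacle: the layer-cake step is the one piece of genuine content, and once it is in place the argument runs by itself, with the tree assumption being used in exactly one spot (induced subgraph is a forest). The only mild awkwardness is the boundary case $n=2$, where a single edge of weight $1$ violates the stated bound $2n-4=0$ and appears to be implicitly excluded.
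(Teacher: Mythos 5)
Your proof is correct, and it takes a genuinely different route from the paper's. The paper orients all edges of the tree away from a maximum-degree vertex $r$, bounds each edge weight $\min[d(u),d(v)]$ by the degree of the endpoint the edge points into, and uses the fact that every non-root vertex has exactly one incoming edge to get $\sum_e w(e) \le \sum_{v\ne r} d(v) = 2(n-1)-d(r) \le 2n-4$; this is a local charging argument, and the paper notes that the same principle reappears in Step~3 of the proof of Lemma~\ref{lem:main}. You instead use the layer-cake identity $\min(d(u),d(v)) = \sum_{k\ge 1}\mathbf{1}\{d(u)\ge k\}\mathbf{1}\{d(v)\ge k\}$, exchange the order of summation, and invoke the tree hypothesis only once, globally: each level set $V_k$ induces a forest, so it contributes at most $|V_k|-1$ edges, giving $\sum_e w(e) \le 2(n-1)-\Delta$. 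Both arguments land on exactly the same intermediate bound $2(n-1)-\Delta$ (since the paper's root has $d(r)=\Delta$), so neither is sharper; the paper's version is slightly more elementary and foreshadows the machinery of the main lemma, while yours isolates the structural content in a single clean fact (induced subgraphs of trees are forests) and packages the degree sum via $\sum_{k\ge 1}|V_k| = \sum_v d(v)$, which makes it easy to see how the bound would degrade on graphs of larger arboricity. Your remark about $n=2$ is accurate but is not a defect relative to the paper: the paper's own proof implicitly needs $d(r)\ge 2$, i.e.\ $n\ge 3$, in the final inequality, so the boundary case is excluded there in exactly the same tacit way.
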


\begin{proof}
Recall that the unique feasible weight function for the tree is $w(u,v) = \min[d(u),d(v)]$ for every edge $(u,v)$. Let $r$ be the highest degree vertex in $G$ (breaking ties arbitrarily) and orient all edges away from $r$. For every oriented edge $(u,v)$ we have that $w(u,v) \le d_v$. As every tree vertex except for $r$ has exactly one edge oriented into it we have that $\sum_{(u,v)\in E} w(u,v) \le \sum_{v \in V \setminus \{r\}} d(v) = 2(n-1) - d_r \le 2n - 4$.
\end{proof}

The proof of Lemma~\ref{lem:main} (for general graphs) is considerably more difficult than the proof of Proposition~\ref{pro:tree} (for trees). We remark that one of the steps of the proof of Lemma~\ref{lem:main} (namely, Step~3) is based on principles similar to those used in the proof of Proposition~\ref{pro:tree}.

We now prove Lemma~\ref{lem:main}.

\begin{proof}
Denote by $d_{G}\left(v\right)$ the degree of a vertex $v$ in $G$.
Throughout the proof, we consider a directed graph $H$  with the same vertex
set as $G$.
Denote  by $d_{H}^{in}\left(v\right)$ the the number of incoming
edges
to $v$ in $H$.

Let $r\left(u,v\right)$ be the minimum between $d_{G}\left(v\right)$
and $d_{G}\left(u\right)$. Recall that for every edge $\left(u,v\right)$
it holds that $0\le w\left(u,v\right)\le r\left(u,v\right)$. For
every edge $e$, we set $\rho\left(e\right)=\frac{w\left(e\right)}{r\left(e\right)}$.
Recall that
\[
\sum_{e\in E}\rho\left(e\right)=n-1\,.
\]

We construct a spanning tree $T$ in three steps.
\medskip

\noindent {\bf Step 1: constructing a directed forest.}
Define the weight of a vertex $v$, $\rho(v)$, to be $\sum_{u\in N(v)}\rho(u,v)$
and note that
\begin{equation}
\sum_{v\in G}\rho(v)=2(n-1)\,. \label{eq:rho}
\end{equation}
Let $\alpha$ be a positive constant smaller than $1$ whose value will be determined later. By averaging, for every
vertex $v\in V$ it holds that at least $\lceil d_{G}\left(v\right) (1-\alpha) \rceil$
of its incident edges satisfies
\begin{equation}
\rho\left(u,v\right)\le \frac{1}{\alpha}\frac{\rho(v)}{d_{G}\left(v\right)}\,.\label{eq:sort}
\end{equation}
We refer to these edges as {\em good} edges. (Observe that an edge is defined as good based on having a low ratio $\rho(e)$, whereas it might appear more natural to base this on having low weight $w(e)$. The reason for our choice of definition will become apparent in the proof of Claim~\ref{claim:size}.)

Let $H$ be the subgraph of $G$ induced by the good edges. Note that the graph
$H$ may not be connected. Orient every good edge in $H$ towards
the vertex that with respect to which this edge is good (some edges might
be bidirectional). Note that
\begin{equation}\label{eq:in}
d_{H}^{in}\left(v\right)\ge (1-\alpha)d_{G}\left(v\right).
\end{equation}
Note also that the definition of $\rho(e)$ ($=\frac{w(e)}{r(e)}$) together with the fact that $r(u,v) \le d_G(v)$ and with Inequality~(\ref{eq:sort}) imply that for every edge $(u,v)$ incoming into $v$ in $H$ it holds that
\begin{equation}\label{eq:w}
w(u,v) \le \frac{1}{\alpha} \rho(v).
\end{equation}

A  path $u_1,u_2...,u_k$ from $u_1$ to $u_k$ is said to be directed path if the edge $u_i,u_{i+1}$ is directed to $u_{i+1}$ for all $1\le i\le k-1$.
Define a rooted directed tree  as a tree with a directed path from the root to any other  vertex in the tree.
A directed spanning forest is a set of disjoint rooted directed trees that spans  the graph.

Consider a directed spanning forest $F$ of $H$ obtained in a greedy
manner as follows. Initially, mark all vertices as {\em uncovered}. Now proceed iteratively until all vertices are marked as {\em covered}. In a single iteration, start from the uncovered vertex $v$ with highest $\rho (v)$ (breaking ties arbitrarily) and construct a maximal directed tree with $v$ as its root. (Given $v$, there might be several possible maximal directed trees to chose from, though they all contain the same set of vertices and differ only by their sets of edges.) Mark all vertices of the tree as {\em covered}, and remove them (and their incident edges) from the graph. Let $k$ denote the total number of iterations until all vertices are covered.
Hence $F$ contains $k$ directed trees, and we denote them by $T_{1},T_{2},...,T_k$ according to the order by which they were generated.

Denote by $Root\left(T_{i}\right)$ the root of $T_{i}$ and by $Roots$ the set of roots of $T_{1},T_{2},...,T_k$.
\begin{claim}
\label{lem:greedy cost}
The sum of weights of all the edges in $F$ is at most $\sum_{v\in G\setminus Roots}\frac{1}{\alpha}\rho(v)$.
\end{claim}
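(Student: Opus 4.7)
The plan is to exploit the bijection between non-root vertices and edges in any rooted directed tree. Since $F$ is a disjoint union of rooted directed trees $T_1,\ldots,T_k$, every vertex $v\in V\setminus Roots$ has a unique incoming edge $e_v$ in $F$ (the last edge on the directed path from $Root(T_{i(v)})$ to $v$): any additional incoming edge to $v$ within its tree would create either a cycle or a second directed root-to-$v$ path, contradicting the rooted tree structure. Hence $v\mapsto e_v$ is a bijection from $V\setminus Roots$ onto the edge set of $F$.

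Using this bijection I would rewrite
\[
\sum_{e\in F} w(e) \;=\; \sum_{v\in V\setminus Roots} w(e_v).
\]
Because $F\subseteq H$ and the orientation on $F$ is inherited from $H$, each edge $e_v$ is in particular an incoming edge to $v$ in $H$, which by the construction of $H$ means that $e_v$ is good with respect to $v$. The previously derived inequality~(\ref{eq:w}) then immediately gives $w(e_v)\le \tfrac{1}{\alpha}\rho(v)$ for every non-root vertex $v$, and summing over $v\in V\setminus Roots$ yields the desired bound.

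There is no real obstacle here: the claim is essentially a bookkeeping consequence of the greedy construction of $F$ together with the already-established bound~(\ref{eq:w}) on the weight of a good oriented edge in terms of the $\rho$-value of its head. The only point worth being careful about is that the charging is to the head of each parent edge (the child vertex in $F$), which is exactly the form in which~(\ref{eq:w}) is stated; this in turn is the reason that goodness in Step~1 was defined via the ratio $\rho(e)$ rather than directly via $w(e)$, as the parenthetical remark after~(\ref{eq:sort}) foreshadows.
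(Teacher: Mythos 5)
Your argument is correct and is essentially the paper's own proof: both charge each edge of $F$ to its head (the child endpoint), note that this edge is incoming to that vertex in $H$ so Inequality~(\ref{eq:w}) applies, and sum over the non-root vertices. The explicit bijection you spell out is exactly the implicit bookkeeping in the paper's phrase ``the edge connecting it to its parent,'' so nothing differs in substance.
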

\begin{proof}
Consider a tree $T_{i}$ in
$F$. For every non-root vertex $v$ in $T_{i}$, the weight of the edge connecting
it to its parent is at most $\frac{1}{\alpha} \rho\left(v\right)$, by Inequality~(\ref{eq:w}).
In total the cost of $T_{i}$ is at most $\sum_{v\in T_{i}\setminus Root\left(T_{i}\right)}\frac{1}{\alpha}\rho(v)$ and
the cost of $F$ is at most $\sum_{\left\{T_{i}\right\}}\sum_{v\in T_{i}\setminus Root\left(T_{i}\right)}\frac{1}{\alpha}\rho(v)=\sum_{v\in G\setminus Roots}\frac{1}{\alpha}\rho(v)$.
\end{proof}

We add all the edges of $F$ to $T$.
\medskip

\noindent {\bf Step 2: enforcing a size requirement.}
Now we add edges to $F$ to obtain $F^{*}$, which is also a spanning forest of $H$ (though the trees in this forest need not be directed -- a tree might have multiple sources and vertices in the tree might have multiple incoming edges). The goal of this step is to satisfy the following
{\em size requirement}. In $F^{*}$,
for every tree $T_{i}$ and for every vertex $v$ in $T_{i}$
we have that
the number of vertices in $T_{i}$ (denote this quantity by $\left|T_{i}\right|$)
is at least $(1-\alpha)d_{G}\left(v\right)$. All the edges we add
in this step are also added to $T$.


We use the following claim.
\begin{claim}
\label{claim:greedy property}Let $1\le i<j\le k$. There are
no edges in $H$ that are directed from $T_{i}$ to $T_{j}$.
\end{claim}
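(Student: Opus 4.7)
The plan is to prove this by contradiction using the maximality property built into the greedy construction. Suppose for contradiction that there exists an edge $e = (u,v)$ in $H$ directed from $u$ to $v$ with $u \in T_i$ and $v \in T_j$ for some $1 \le i < j \le k$.

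The key observation is to track which vertices are still uncovered at the start of iteration $i$. Since the trees are indexed in the order they were generated, at the moment $T_i$ is constructed, none of the vertices that will eventually be placed in $T_{i+1}, T_{i+2}, \ldots, T_k$ have been covered yet. In particular, $v$ is uncovered when iteration $i$ begins, so $v$ is still present in the working subgraph of $H$ from which $T_i$ is grown.

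Now I would invoke the maximality of the directed tree $T_i$. By construction, $T_i$ is a maximal directed tree rooted at $\mathrm{Root}(T_i)$, meaning that it contains every vertex reachable from its root via a directed path in the current graph. Since $u \in T_i$, there is a directed path from $\mathrm{Root}(T_i)$ to $u$ inside $T_i$; appending the directed edge $(u,v)$ yields a directed path from $\mathrm{Root}(T_i)$ to $v$ in the working graph at iteration $i$. Hence $v$ should have been added to $T_i$ during its construction, contradicting the assumption that $v$ ends up in $T_j$ with $j > i$.

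I do not expect any real obstacle here: the argument is essentially just unwinding the definitions of ``maximal directed tree'' and of the iterative covering order. The only minor care needed is to verify that removing the vertices (and incident edges) of $T_1, \ldots, T_{i-1}$ before iteration $i$ does not accidentally delete the edge $(u,v)$ or the vertex $v$; this is immediate because $u \in T_i$ and $v \in T_j$ are both still uncovered at the start of iteration $i$, and the edge between them survives.
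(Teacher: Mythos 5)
Your proof is correct and follows essentially the same route as the paper: assume a directed edge from $T_i$ to $T_j$ with $i<j$, note that $v$ is still uncovered when $T_i$ is built, and derive a contradiction with the maximality of $T_i$. Your write-up simply spells out the maximality and covering-order details that the paper leaves implicit.
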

\begin{proof}
The claim follows by the way we constructed $F$. Assume towards a contradiction that there exists a directed edge (from $u$ to $v$)  from $T_i$ to $T_j$ for some $i < j$. It follows that when $T_i$ was constructed we could add $v$ to $T_i$. This contradicts the maximality of $T_i$.
\end{proof}

Claim~\ref{claim:greedy property} implies that for every
vertex in $T_{k}$, all of its incoming edges in $H$ have their other endpoint
in $T_{k}$. Inequality~(\ref{eq:in}) then implies that $T_{k}$ satisfies
the size requirement. We now proceed by induction to enforce the size requirement on the remaining directed trees. In the process we shall add edges to $F$ thus connecting disjoint directed trees into new trees.

In the inductive step, the size requirement for $T_{m+1},...,T_{k}$ (or more exactly, for the trees that resulted from processing $T_{m+1},...,T_{k}$) is assumed to hold.
Consider $T_{m}$,
and let $v$ be the vertex in $T_m$ with highest $d_{G}\left(v\right)$.

If $(1-\alpha)d_{G}(v)\le \left|T_m\right|$ then the size requirement holds and nothing needs to be done. Hence we may assume that $(1-\alpha)d_{G}(v)> \left|T_m\right|$. As $d_H^{in}(V) \ge (1-\alpha)d_{G}(v)$ it follows that $v$ has incoming edges from outside of $T_m$. By Claim~\ref{claim:greedy property} these incoming edges come from trees that were already processed in previous inductive steps.
Suppose that $v$ has an incoming edge from a vertex $u$ for which $d_G(u) \ge  d_G(v)$.
Then add the edge $\left(u,v\right)$
to $F^{*}$, paying at most $\alpha\rho\left(v\right)$ (by Inequality~(\ref{eq:w})) and satisfying
the size requirement.

It remains to consider the case that every incoming edge to $v$ from
$T_{m+1},...,T_{k_{1}}$ comes from vertices with degree smaller than
$d_G(v)$. Iteratively ($i$ goes from $1$ to at most $(1-\alpha)d_{G}\left(v\right) $), we add
edges $\left(u_{i},v\right)$ for $u_{i}$ in some $T_{n_{i}}$ (note
that $n_{i}>m$ and that by the assumption $\left|T_{n_{i}}\right|$
is at least $(1-\alpha)d_{G}\left(u_{i}\right)$ ) until $T_m$ combined with the trees added
to it  has $(1-\alpha)d_{G}\left(v\right) $ vertices.
(The process must end as the set of candidate trees that can be added to $T_m$ includes all end points of incoming edges to $v$, and  $d_H^{in}(v) \ge (1-\alpha) d_{G}\left(v\right)$.)

Considering all the iterations except for the last one it holds that
\[
\sum_{i=1}^{\text{\# iterations}-1}(1-\alpha)d_{G}\left(u_{i}\right)\le\sum_{i=1}^{\text{\# iterations}-1}\left|T_{n_{i}}\right|\le (1-\alpha) d_{G}\left(v\right)\,.
\]
and considering also the last iteration that adds an edge connected to a vertex of degree at most $d_G(v)$ we have:
\begin{equation}
\sum_{i=1}^{\text{\# iterations}}d_{G}\left(u_{i}\right)\le 2d_{G}\left(v\right)\,.\label{eq:2}
\end{equation}

\begin{claim}
\label{claim:size}
The sum weight of all edges added in all iterations satisfies
$$\sum_{i=1}^{\text{\# iterations}}w\left(u_{i},v\right) \le \frac{2}{\alpha}\rho\left(v\right).$$
\end{claim}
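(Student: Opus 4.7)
The plan is to exploit the very choice of definition flagged in the parenthetical remark just after Inequality~(\ref{eq:sort}): an edge is good for $v$ based on its ratio $\rho(e) = w(e)/r(e)$, not on its raw weight $w(e)$. This is precisely what will let me convert the degree sum in Inequality~(\ref{eq:2}) into a weight sum.

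First I would note that each edge $(u_i,v)$ added in this step is, by construction, an edge of $H$ directed into $v$, so it is good for $v$. By Inequality~(\ref{eq:sort}) this means
\[
\rho(u_i,v) \;\le\; \frac{1}{\alpha}\,\frac{\rho(v)}{d_G(v)}.
\]
Unpacking $\rho$ and multiplying through by $r(u_i,v)$,
\[
w(u_i,v) \;=\; \rho(u_i,v)\cdot r(u_i,v) \;\le\; \frac{1}{\alpha}\,\frac{\rho(v)}{d_G(v)}\cdot r(u_i,v).
\]
This is the crucial step: had ``good'' been defined by a bound on $w(u,v)$ directly, every incoming good edge would only give a bound of $\frac{1}{\alpha}\rho(v)$, and summing over up to $(1-\alpha)d_G(v)$ iterations would be hopeless. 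Defining goodness via the ratio $\rho$ lets the bound scale with $r(u_i,v)$, which in turn is controlled by $d_G(u_i)$.

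Next I would use the trivial bound $r(u_i,v) = \min[d_G(u_i),d_G(v)] \le d_G(u_i)$ to obtain
\[
w(u_i,v) \;\le\; \frac{1}{\alpha}\,\frac{\rho(v)\,d_G(u_i)}{d_G(v)}.
\]
Summing over all iterations and applying Inequality~(\ref{eq:2}), which gives $\sum_i d_G(u_i) \le 2\,d_G(v)$,
\[
\sum_{i=1}^{\text{\# iterations}} w(u_i,v) \;\le\; \frac{\rho(v)}{\alpha\, d_G(v)}\sum_i d_G(u_i) \;\le\; \frac{\rho(v)}{\alpha\, d_G(v)}\cdot 2\,d_G(v) \;=\; \frac{2}{\alpha}\rho(v),
\]
which is the claimed bound.

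There is no real obstacle here; the only subtlety is recognizing that the work was done earlier, in the choice to define goodness via $\rho$ rather than $w$, and in the book-keeping of Inequality~(\ref{eq:2}) which separates the final (possibly ``degree-$\ge d_G(v)$'') iteration from the first ``\#iterations$-1$'' iterations. Both pieces are already in place, so the proof is essentially a two-line unpacking.
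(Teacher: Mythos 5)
Your proof is correct and is essentially the paper's own argument: both convert each $w(u_i,v)$ into $\rho(u_i,v)\cdot r(u_i,v)$, bound $\rho(u_i,v)$ by the goodness condition (Inequality~(\ref{eq:sort})), and then invoke Inequality~(\ref{eq:2}) on $\sum_i d_G(u_i)$. The only cosmetic difference is that you use $r(u_i,v)\le d_G(u_i)$ while the paper uses the equality $r(u_i,v)=d_G(u_i)$ (valid here since this case has $d_G(u_i)<d_G(v)$); the computation is the same.
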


\begin{proof}
The proof can be derived as follows:
\begin{align*}\sum_{i=1}^{\text{\# iterations}}w\left(u_{i},v\right) & =\sum_{i=1}^{\text{\# iterations}}d_{G}\left(u_{i}\right)\frac{w\left(u_{i},v\right)}{d_{G}\left(u_{i}\right)}\\
 & =\sum_{i=1}^{\text{\# iterations}}d_{G}\left(u_{i}\right)\frac{w\left(u_{i},v\right)}{r\left(u_{i},v\right)}\\
 & =\sum_{i=1}^{\text{\# iterations}}d_{G}\left(u_{i}\right)\rho\left(u_{i},v\right)\\
 & \le\sum_{i=1}^{\text{\# iterations}}d_{G}\left(u_{i}\right)\frac{1}{\alpha}\frac{\rho\left(v\right)}{d_{G}\left(v\right)}\\
 & \le 2\frac{1}{\alpha}\rho\left(v\right)\,.
\end{align*}
The two last inequalities follow by Inequality~(\ref{eq:sort}) and
Inequality~(\ref{eq:2}).
\end{proof}

To conclude, we started with the trees $T_{1},T_{2},...,T_k$. For each tree $T_{i}$ the total weight of the added edges (to enforce the size requirement) is at most $\frac{2}{\alpha}\rho (v)$, for some $v\in T_i$. By the greedy choice of root in Step~1 we have that $\rho (v)\le \rho (Root (T_i))$.
In total the cost of edges that we added to $T$
to obtain $F^{*}$ is at most $\sum_{v\in Roots} \frac{2}{\alpha} \rho (v)$.
\medskip

\noindent {\bf Step 3: connecting the forest into a tree.}
Now we use additional edges from $G$ (regardless of whether they are good edges) to convert $F^{*}$ into a spanning
tree of $G$. Again, all the edges
that we add are added to $T$ as well. Let $t$ denote the number of trees
in $F^{*}$. Let $E_{2} \subset E$ be an arbitrary set of $t-1$ edges  such that
adding them to $F^{*}$ results in a spanning tree to $G$ (such a set $E_2$ must exist because $G$ is connected). Let $W$
be a subgraph of $G$ induced by
$F^{*} \cup E_{2}$ ($W$ is a spanning tree of $G$). Consider the tree $T'$ obtained by
contracting (in $W$) all the vertices in each connected component
in $H$ into a single vertex (ignore edge orientation). In other words,
each vertex in $T'$ represents a connected component in $F^{*}$
and the cost of $E_{2}$ is the cost of $T'$.

For each tree $F_{i}$ in $F^{*}$ we denote by $d\left(F_{i}\right)$
the quantity $max_{v\in F_{i}}d_{G}\left(v\right)$. Note that by
the size requirement it holds that
\begin{equation}
\left|F_{i}\right|\ge(1-\alpha)d\left(F_{i}\right)\,.\label{eq:1-1}
\end{equation}
 We define a root for  $T'$  at some arbitrary vertex/component and
each vertex/component $F_{i}$ pays for the edge (denoted by $e\left(F_{i}\right)$) connecting it to
its parent. The cost of $T'$ is
\begin{align}
\sum_{i=1}^{t-1}e\left(F_{i}\right) & \le\sum_{i=1}^{t-1}d\left(F_{i}\right)\nonumber \\
 & \le\sum_{i=1}^{t-1}\frac{1}{(1-\alpha)}\left|F_{i}\right|\label{eq:2-1}\\
 & \le\frac{1}{(1-\alpha)}\left(n-1\right)\,,\nonumber
\end{align}
where Inequality~\ref{eq:2-1} follows by Inequality~\ref{eq:1-1} .
\medskip

\noindent {\bf Overall cost.}
Steps 1, 2 and 3 uses edges of total weight at most $\sum_{v\in G\setminus Roots}\frac{1}{\alpha}\rho(v)$,
 $\sum_{v\in Roots} 2\frac{1}{\alpha} \rho (v)$ and $\frac{1}{(1-\alpha)}\left(n-1\right)$, respectively. Hence the total weight of edges used in $T$ is
 \begin{align*}
 \sum_{v\in G\setminus Roots}\frac{1}{\alpha} \rho(v)+\sum_{v\in Roots} 2\frac{1}{\alpha} \rho (v)+\frac{1}{(1-\alpha)}\left(n-1\right) & \le \sum_{v\in G}2\frac{1}{\alpha}\rho (v)+\frac{1}{(1-\alpha)}\left(n-1\right) \\
  & \le \left(4\frac{1}{\alpha}+\frac{1}{(1-\alpha)}\right)\left(n-1\right)\,,
 \end{align*}
 where the last inequality  follows by Inequality~(\ref{eq:rho}). By setting $\alpha=\frac{2}{3}$,  the proof follows.
\end{proof}

We now turn to Theorem~\ref{thm:upper}. Our first step is to observe that the
combination of Theorem~\ref{thm:ST} and Proposition~\ref{pro:ACD} implies the following corollary:

\begin{cor}
\label{cor:ST}
Let $G(V,E)$ be an arbitrary $n$ vertex connected graph, equipped with the minimum degree conductance function $c(u,v) = \frac{1}{\min[d(u),d(v)]}$.
For every $e \in E$ let $R(e)$ denote the induced effective resistance. Then given any collection $T$ of edges that form a spanning tree in $G$, the cyclic cover time satisfies:

$$CYC[G,c] \le 2(n-1) \left(\sum_{e \in T} R(e)\right)$$
\end{cor}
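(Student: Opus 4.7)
The plan is essentially a one-line substitution: combine the spanning-tree bound of Theorem~\ref{thm:ST} with the conductance sum bound of Proposition~\ref{pro:ACD}.

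First I would invoke Theorem~\ref{thm:ST}, which applies to any connected graph $G(V,E)$ equipped with any conductance function $c$ and any spanning tree $T \subseteq E$, yielding
\[
CYC[G,c] \;\le\; 2\left(\sum_{e\in E} c(e)\right)\left(\sum_{e\in T} R(e)\right).
\]
This bound comes from the standard spanning-tree argument (inequality~(\ref{eq:ST})) together with the electrical-network identity (\ref{eq:commute}) for commute times, so no new work is needed here.

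Second, I would specialize to the minimum degree weighting scheme $c(u,v) = 1/\min[d(u),d(v)]$. For this choice of conductance, Proposition~\ref{pro:ACD} supplies the bound $\sum_{e\in E} c(e) \le n-1$. Substituting this into the inequality above gives
\[
CYC[G,c] \;\le\; 2(n-1)\left(\sum_{e\in T} R(e)\right),
\]
which is exactly the stated conclusion. There is no genuine obstacle: both ingredients have already been established in the excerpt, and the corollary amounts to multiplying them together. I would simply make explicit that $T$ and $R(e)$ here refer to the effective resistance induced by the minimum degree conductance, so the factor $\sum_{e\in T} R(e)$ in the resulting bound is the quantity we will subsequently estimate via Lemma~\ref{lem:main} (together with Foster's identity, Lemma~\ref{lem:foster}, which shows that the effective resistances form a feasible weight function in the sense required by Lemma~\ref{lem:main}).
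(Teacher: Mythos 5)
Your proposal is correct and matches the paper exactly: the paper obtains Corollary~\ref{cor:ST} precisely by combining Theorem~\ref{thm:ST} with the bound $\sum_{e\in E} c(e) \le n-1$ from Proposition~\ref{pro:ACD}. Nothing further is needed.
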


Equipped with Lemma~\ref{lem:main} and with Corollary~\ref{cor:ST}, we now prove Theorem~\ref{thm:upper}.

\begin{proof}
Lemma~\ref{lem:main} shows that for every feasible weight function there is a tree $T$ for which the sum of edge weights is at most $9n$. The effective resistance function $R$ is a feasible weight function, and hence there is a tree $T$ for which the sum of effective resistances of its edges is at most $9n$. Combining this with Corollary~\ref{cor:ST} we obtain:

$$CYC[G,c] \le 2(n-1) \left(\sum_{e \in T} R(e)\right) < 2n \cdot 9n = 18n.$$
\end{proof}

We remark that if $G$ happens to be a tree, then replacing Lemma~\ref{lem:main} by Proposition~\ref{pro:tree} in the proof of Theorem~\ref{thm:upper} shows that $CYC[G,c] < 4n$.

\section{Optimality of Theorem~\ref{thm:upper}}
\label{sec:optimality}

\subsection{Proof of Proposition~\ref{pro:lower}}

We now prove Proposition~\ref{pro:lower}.

\begin{proof}
Let $G(V,E)$ be an arbitrary connected graph on $n$ vertices equipped with an arbitrary conductance function $c$. W.l.o.g., let the cyclic order that minimizes the cyclic cover time be $v_1, v_2, \ldots, v_n, v_1$. Then

$$CYC[G,c] = H[v_n,v_1] + \sum_{i=1}^{n-1} H[v_i,v_{i+1}] = \frac{1}{2}\left(C[v_n, v_1] + \sum_{i=1}^{n-1} C[v_i,v_{i+1}]\right)$$
where the first equality is by definition and the second equality follows from Equation~(\ref{eq:cyclic}).

Observe that for any two vertices $u$ and $v$, $H[u,u] \le C[u,v]$, and likewise, $H[v,v] \le C[u,v]$. Consequently $C[u,v] \ge \frac{1}{2}\left(H[u,u] + H[v,v]\right)$. Using Equation~(\ref{eq:return}) we conclude that

$$CYC[G,c] \ge \frac{1}{2}\sum_{v\in V} \frac{1}{\pi(v)} \ge \frac{1}{2}n^2$$
where the last inequality follows from straightforward convexity arguments, using positivity of $\pi(v)$ and the equality $\sum_{v \in V} \pi(v) = 1$.
\end{proof}

The lower bound in Proposition~\ref{pro:lower} is best possible when $n=2$, but not for larger $n$. For the special case of simple random walks, a stronger lower bound is provided by Inequality~(\ref{eq:CFS}), and that inequality is tight for simple random walks on complete graphs (there the cyclic cover time is $n(n-1)$.

\subsection{Local rules in general}

This section contains (among other things) claims regarding the cover times for various specific graphs. In all cases, proofs of these claims are relatively straightforward, and hence omitted for brevity.

Recall that for simple random walks on regular graphs the cover time is $O(n^2)$, but might be $\Omega(n^3)$ for nonregular graphs. The Lollipop graph, composed of a clique of size $2n/3$ joined to a path of length $n/3$ (see Figure~\ref{fig:lollipop}) has cover time  of roughly $\frac{4n^3}{27}$ (for a walk starting at a clique vertex), and this is highest possible (up to low order terms), as shown in~\cite{feige1}.

\begin{figure}

	\begin{centering}
		\includegraphics[scale=0.6]{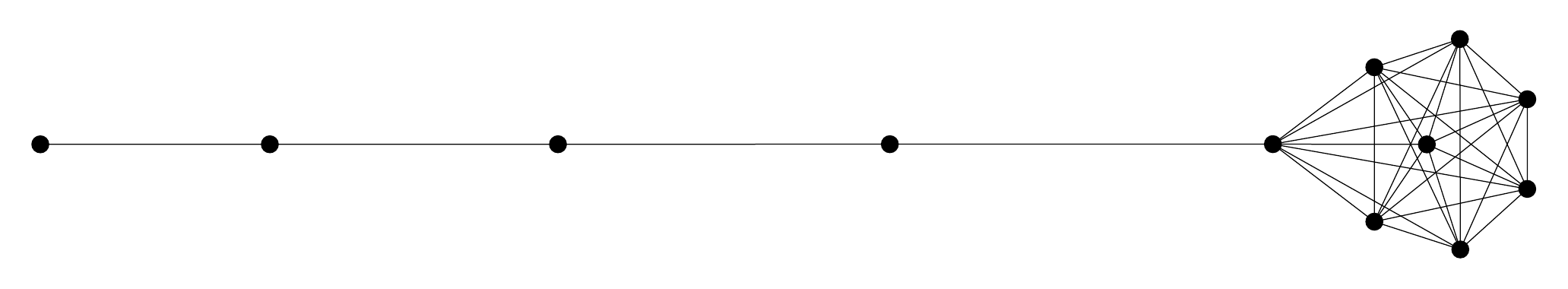}\protect
		\caption{Lollipop graph}
		\label{fig:lollipop}
		\par\end{centering}
	
\end{figure}

In this section we discuss various candidate approaches for modifying simple random walks in order to obtain random walks whose cover time is $O(n^2)$ for every graph, whether regular or not.  As proved in~\cite{IKY}, reversible random walks on the path have cover time at least $\Omega(n^2)$ regardless of the conductance function. Hence bounds of the form $O(n^2)$ are the best we can hope for (if they they are expressed as a function of $n$ and need to hold for every graph).

\begin{rem}
It is desirable to obtain the even stronger property of having cyclic cover time of $O(n^2)$. Such a bound has the following interpretation. Recall that cyclic cover time is tightly related (via Inequality~\ref{eq:CFS}) to natural measures of regularity of a graph. In this respect, achieving $O(n^2)$ cyclic cover time can be thought of as accomplishing the goal of making the underlying graph (nearly) regular (in the eyes of random walks).

An alternative feature unique to regular graphs is having a uniform stationary distribution. However, no modification of random walks will enforce a nearly stationary distribution for graphs such as the star graph.
\end{rem}

An advantage of simple random walks is that they can be implemented with very little local knowledge -- in considering where to move next, all one needs to know is who the neighbors of the current vertex are. In contrast, local rules such as those considered in this paper require significantly more local knowledge -- one needs to know also the degrees of the neighbors. One may hope that there are lighter modifications to simple random walks that ensure an $O(n^2)$ cover time (as good as that given by the minimum degree local rule, but easier to implement). One candidate modification is to allow the walk (say, at a vertex $v$) to remember the vertex (say, $u$) it last came from, and the degree of $u$. The next step is then uniform over all neighbors of $v$ except for $u$, and the probability of going to $u$ might differ (might be lower or higher). Using such a modification one can implement (for example) non-backtracking random walks~\cite{ABLS} and Metropolis random walks~\cite{NOSY}. However, for the clique-star graph that is composed of a clique of size $n/2$ connected via a matching to $n/2$ independent vertices (see Figure~\ref{fig:split}), every such random walk has cover time $\Omega(n^2\log n)$.

\begin{figure}

	\begin{centering}
		\includegraphics[scale=0.6]{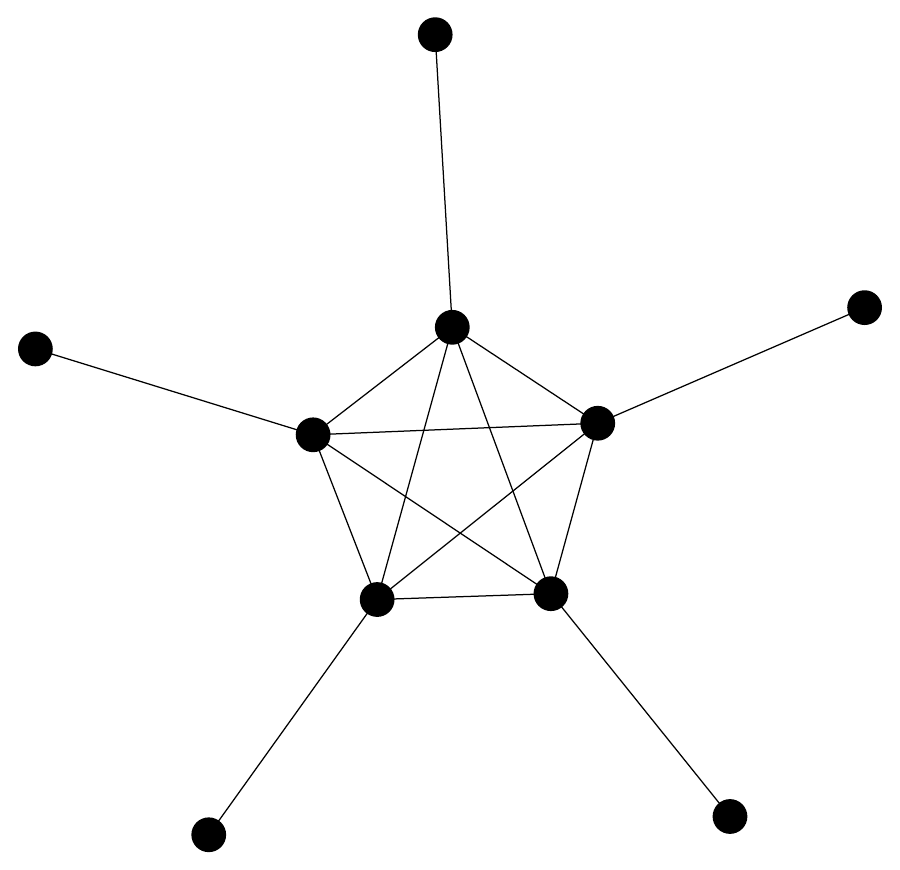}\protect
		\caption{Clique-star graph}
		\label{fig:split}
		\par\end{centering}
	
\end{figure}

Let us return to local rules (as in Section~\ref{sec:local}). We have seen that the minimum degree local rule ensures not just cover time but also cyclic cover time of $O(n^2)$. In this last respect it is unique, up to rough equivalence, as stated in Proposition~\ref{pro:local} that we prove next.

\begin{proof}
Let $c'$ be an arbitrary conductance function not roughly equivalent to the minimum degree one. Namely, for every $\beta > 1$ (and scaling $c'$ by a fixed constant as needed) there are degrees $d_1 \le d_3$ and $d_2 \le d_4$ such that $c'(u,v) = \frac{1}{d_1}$ if $d(u) = d_1$ and $d(v) = d_3$ (referred to as edges of type~1), and $c'(u,v) \le \frac{1}{\beta d_2}$ if $d(u) = d_2$ and $d(v) = d_4$ (referred to as edges of type~2). Moreover $d_3, d_4 \ge 2$ (because when $n > 2$ there are no edges with both endpoints of degree~1).

Consider a graph $G$ composed of $d_4 - 1$ copies of complete bipartite graphs $K_{d_3 - 1, d_1}$ and $d_3 - 1$ copies of complete bipartite graphs $K_{d_4 - 1, d_2}$, and make the graph connected by adding a perfect matching on the right hand side vertices of these bipartite graphs. (If the number of right hand side vertices is odd, make two copies of the above construction and then perfect matchings exist.) Observe that $G$ has exactly $(d_3 - 1)(d_4 - 1)$ left hand side vertices of degree $d_1$ (referred to as vertices of type~1) and all their incident edges are of type~1, and exactly $(d_3 - 1)(d_4 - 1)$ left hand side vertices of degree $d_3$ (referred to as vertices of type~2) and all there incident edges are of type~2. Moreover, the total number of vertices in $G$ is $n = 2(d_3 - 1)(d_4 - 1) + (d_4 - 1)d_1 + (d_3 - 1)d_2 \le 6(d_3 - 1)(d_4 - 1)$.

Consider now the stationary distribution on $G$. There are at least $n/6$ vertices of type~1, and for each such vertex the sum of conductances of its incident edges is~1. Hence $\sum_{e\in E} c(e) \ge \frac{n}{6}$. There are at least $n/6$ vertices of type~2, and for each such vertex the sum of conductances of its incident edges is $\frac{1}{\beta}$, and hence for such a vertex $v$ we have that $\pi(v) \le \frac{6}{\beta n}$. Using the relation $CYC[G,c] \ge \frac{1}{2}\sum_{v \in V} \frac{1}{\pi(v)}$ we obtain that $CYC[G,c] \ge \frac{n}{6} \cdot \frac{\beta n}{6} = \frac{\beta}{36}n^2$. As $\beta$ cannot be bounded by any fixed constant (because $c'$ is not roughly equivalent to the minimum degree local rule), the cyclic cover time with conductance function $c'$ cannot be bounded by $O(n^2)$.
\end{proof}

Some properties of the minimum weight local rule include the following:

\begin{enumerate}

\item The total conductance of all edges satisfies $\sum_{e\in E} c(e) \le n$.

\item In the stationary distribution, for every vertex $\pi(v) \ge \frac{1}{2n}$.

\item For every two vertices $u$ and $v$ their commute time is bounded by $C[u,v] = O(n^2)$.

\item The conductance of an edge is a function of the degrees of its endpoints. Moreover, for every edge $(u,v)$ its resistance is in the range $\min[d(u),d(v)] \le r(u,v) \le \max[d(u),d(v)]$.

\end{enumerate}

One might hope that the above list of properties suffices to ensure a cover time of $O(n^2)$. However, the {\em glitter star} graph  (considered previously in~\cite{NOSY}) serves as a counter example, see Figure~\ref{fig:glitter}. In this graph there is a central vertex and paths of length two connected to it (see Figure~\ref{fig:glitter}). For the conductance function $c(u,v) = \frac{1}{\max[d_u,d_v]}$ all above properties hold but nevertheless the cover time is $\Omega(n^2\log n)$.

\begin{figure}

	\begin{centering}
		\includegraphics[scale=0.6]{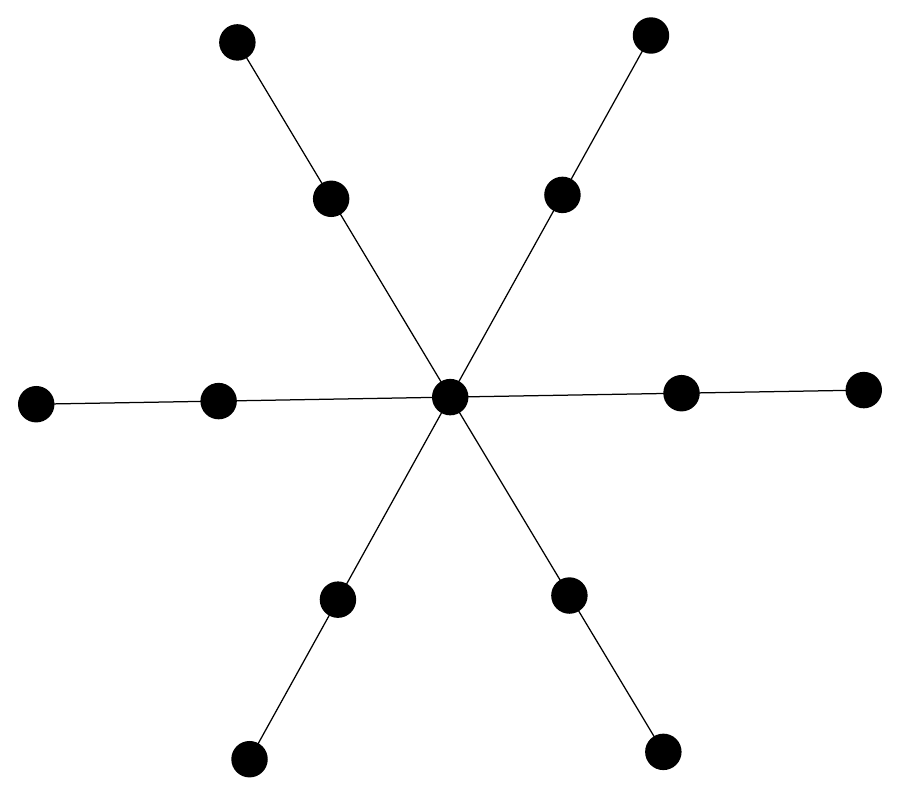}\protect
		\caption{Glitter star graph}
		\label{fig:glitter}
		\par\end{centering}
	
\end{figure}

Finally, it is worth pointing out that even though the minimum degree local rule ensures $O(n^2)$ cover time, for some graphs it increases the cover time. For example, for a graph composed of two stars whose centers are connected by an edge (see Figure~\ref{fig:double}) the cover time of simple random walks is $\Theta(n\log n)$, but with the minimum degree local rule the cover time is $\Theta(n^2)$.

\begin{figure}

	\begin{centering}
		\includegraphics[scale=0.6]{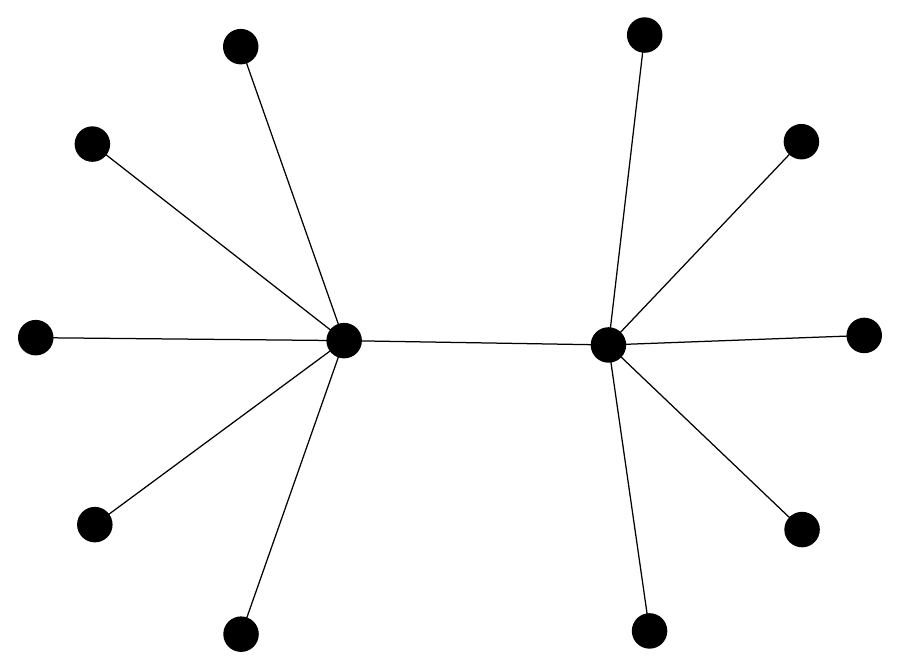}\protect
		\caption{Double star graph}
		\label{fig:double}
		\par\end{centering}
	
\end{figure}

\bigskip
{\bf Acknowledgements:}
Research supported in part by the Israel Science Foundation (grant No. 621/12) and by the I-CORE Program of the Planning and Budgeting Committee and the Israel Science Foundation (grant No. 4/11).

\bibliographystyle{plain}
\bibliography{bib}

\end{document}